\documentclass{amsart}
\usepackage{amscd,amsmath,amssymb,amsthm,bm,cases,color,comment,mathrsfs,mathtools}
\usepackage[dvips]{graphicx}
\usepackage[all]{xy}

\theoremstyle{definition}
\newtheorem{definition}{Definition}[section]

\theoremstyle{plain}
\newtheorem{theorem}[definition]{Theorem}
\newtheorem{proposition}[definition]{Proposition}

\numberwithin{equation}{section}

\sloppy

\title[Distortion of the Torelli groups with boundary components]{A lower bound of the distortion of the Torelli group in the mapping class group with boundary components}

\author[E.~Kuno]{Erika Kuno}
\address{
(Erika Kuno)
Department of Mathematics,
Tokyo Institute of Technology,
2-12-1 Oh-okayama, Meguro-ku, Tokyo 152-8551, Japan
}
\email{kuno.e.aa@m.titech.ac.jp}
\author[G.~Omori]{Genki Omori}
\address{
(Genki Omori)
Department of Mathematics,
Tokyo Institute of Technology,
Oh-okayama, Meguro, Tokyo 152-8551, Japan
}
\email{omori.g.aa@m.titech.ac.jp}
\date{\today}
\keywords{Mapping class group; Torelli groups; subgroup distortion}
\subjclass[2010]{Primary 20F65; Secondary 20F38, 57M07}

\begin{document}

\begin{abstract}
We prove that each Torelli group of an orientable surface with any number of boundary components is at least exponentially distorted in the mapping class group by using Broaddus-Farb-Putman's techniques.
Further we show that the distortion of each Torelli group in the level $d$ mapping class group is the same as that of in the mapping class group. 
\end{abstract}

\maketitle

%%%%%%%%%%%%%%%%%
%%%%%Introduction%%%%%%
%%%%%%%%%%%%%%%%%
\section{Introduction}\label{Introduction}

Let $S_{g, b}$ be a compact connected orientable surface of genus $g\geq 3$ with $b\geq 0$ boundary components.
The {\it mapping class group} $\mathcal{M}_{g, b}$ of $S_{g, b}$ is the group consists of the isotopy classes of orientation preserving homeomorphisms on $S_{g, b}$, fixing the boundary components pointwise.
Let $i\colon S_{g, b}\rightarrow S_{g, 0}$ be a natural inclusion map defined by capping the boundary components of $S_{g, b}$ by $b$ disks.
Then there is a surjective homomorphism $\varphi\colon \mathcal{M}_{g, b}\rightarrow\mathcal{M}_{g, 0}$ induced by $i$, namely, $\varphi(f)=[\bar{F}]$ ($f=[F]\in\mathcal{M}_{g, b}$), where $\bar{F}(x)=F(x)$ if $x\in S_{g, b}$ and $\bar{F}(x)=x$ if $x\not\in S_{g, b}$.
For $b=0,1$, the {\it Torelli group} ${\mathcal I}_{g, b}$ of $S_{g, b}$ is the kernel of the natural homomorphism $\Phi \colon {\mathcal M}_{g, b}\rightarrow {\rm Aut}(H_{1}(S_{g, b};{\mathbb Z}))$.
We note that for $b=0,1$ the {\it Torelli group} is a normal subgroup of the mapping class group.
For $b\geq 2$, we define the {\it Torelli group} ${\mathcal I}_{g, b}$ of $S_{g, b}$ by $\varphi^{-1}(\mathcal{I}_{g, 0})$.
By the definition, $\mathcal{I}_{g,b}$ is also a normal subgroup of $\mathcal{M}_{g,b}$ for $g\geq 2$. 
There are several definitions of Torelli groups with $b\geq 2$ boundary components (see~\cite{Putman11}).
In case of our definition, the Torelli group is finitely generated (it is a consequence of \cite[Theorem 4.1]{Putman11}), and hence it is equipped with a word norm.
The geometry of the mapping class groups of orientable surfaces is well understood.
Therefore, understanding the geometry of the inclusion homomorphism ${\mathcal I}_{g, b}\hookrightarrow\mathcal{M}_{g, b}$ may allow one to deduce geometric
properties of the Torelli group from geometric properties of the mapping class group.

Let $G$ be a finitely generated group and $K$ a finitely generated subgroup of $G$.
In this paper, we denote by $\parallel\cdot\parallel_{G}$ a word metric of a finitely generated group $G$. 
Then, there exists $C>0$ such that $\parallel h\parallel_{G}\leq C\parallel h\parallel_{K}$ for any $h\in K$.
A natural question is what the smallest function $\delta\colon{\mathbb N}\rightarrow{\mathbb R}$ which satisfies $\parallel h\parallel_{K}\leq\delta(\parallel h\parallel_{G})$ is.
The distortion of $K$ in $G$ is at most $\delta$ if there exists $C$, $C'$ such that for each $h\in K$, it follows that $\parallel h\parallel_{K}\leq C\delta(\parallel h\parallel_{G})+C'$.
The distortion of $K$ in $G$ is at least $\delta$ if there exists a sequence $\{h_{i}\}$ ($h_{i}\in K$) such that the word norm of $h_{i}$ in $G$ grows linearly, while the word norm in $K$ grows $\delta$.
If $K$ is at most and at least $\delta$ distorted in $G$, then we say the {\it distortion} of $K$ in $G$ is (exactly) $\delta$.
If the distortion of $K$ in $G$ is linear, we call $K$ is {\it undistorted} in $G$.

The distortions of various subgroups in the mapping class groups have been investigated.
For example, the mapping class groups of subsurfaces are undistorted in the mapping class group by Masur-Minsky~\cite{Masur-Minsky00} and Hamenst\"{a}dt~\cite{Hamenstad09a}, or the handle body group is exponentially distorted in the corresponding mapping class group by Hamenst\"{a}dt-Hensel~\cite{Hamenstad-Hensel12}.
Moreover, Broaddus-Farb-Putman~\cite{Broaddus-Farb-Putman11} proved that $\mathcal{I}_{g, 0}$ (resp. $\mathcal{I}_{g, 1}$) is at least exponentially and at most double exponentially distorted in $\mathcal{M}_{g, 0}$ (resp. $\mathcal{M}_{g, 1}$).
From the result of Cohen~\cite{Cohen14}, it follows that the distortion of $\mathcal{I}_{g, 0}$ (resp. $\mathcal{I}_{g, 1}$) in $\mathcal{M}_{g, 0}$ (resp. $\mathcal{M}_{g, 1}$) is exponential.
We find a lower bound of the distortion of the Torelli groups in the mapping class groups for the orientable surfaces with $b\geq2$ boundary components by applying Broaddus-Farb-Putman's arguments:

\begin{theorem}\label{first_thm}
For $g\geq 3$ and $b\geq 2$, ${\mathcal I}_{g, b}$ is at least exponentially distorted in ${\mathcal M}_{g, b}$.
\end{theorem}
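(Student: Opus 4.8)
The plan is to deduce the theorem from the case $b=1$ of Broaddus--Farb--Putman by means of a split injection of mapping class groups that is compatible with the Torelli filtrations. Recall that for $b=1$ they produce a sequence $w_{i}\in\mathcal{I}_{g,1}$ (typically conjugates of a fixed Torelli element by a power of a mapping class acting with large homological spectral radius) for which $\|w_{i}\|_{\mathcal{M}_{g,1}}$ grows linearly in $i$ while $\|w_{i}\|_{\mathcal{I}_{g,1}}$ grows at least like $\lambda^{i}$ for some $\lambda>1$. If I can transport such a sequence into $\mathcal{I}_{g,b}$ while controlling both word metrics from above and below, the statement follows at once.

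To set up the transport, write $S_{g,b}=S_{g,1}\cup\Sigma$, where $\Sigma\cong\Sigma_{0,b+1}$ is a sphere with $b+1$ holes glued to $S_{g,1}$ along one boundary circle, the remaining $b$ boundary circles of $\Sigma$ being those of $S_{g,b}$. Extending a homeomorphism of $S_{g,1}$ (fixing $\partial S_{g,1}$ pointwise) by the identity on $\Sigma$ defines a homomorphism $\iota_{*}\colon\mathcal{M}_{g,1}\to\mathcal{M}_{g,b}$. Conversely, capping $b-1$ of the $b$ boundary components of $S_{g,b}$ with disks turns $\Sigma$ into an annular collar and recovers $S_{g,1}$, yielding a capping homomorphism $c\colon\mathcal{M}_{g,b}\to\mathcal{M}_{g,1}$. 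Since an element in the image of $\iota_{*}$ is supported in $S_{g,1}$ while the capped disks lie in $\Sigma$, one checks $c\circ\iota_{*}=\mathrm{id}_{\mathcal{M}_{g,1}}$; in particular $\iota_{*}$ is injective. Next I must verify that these maps respect the Torelli groups as defined here. Let $\bar{c}\colon\mathcal{M}_{g,1}\to\mathcal{M}_{g,0}$ be the remaining capping map, so that $\bar{c}\circ c=\varphi$, and observe that capping a single disk induces an isomorphism $H_{1}(S_{g,1};\mathbb{Z})\xrightarrow{\sim}H_{1}(S_{g,0};\mathbb{Z})$ that is natural with respect to $\bar{c}$. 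Combining $\mathcal{I}_{g,b}=\varphi^{-1}(\mathcal{I}_{g,0})$ with this isomorphism gives $c(\mathcal{I}_{g,b})\subseteq\mathcal{I}_{g,1}$ and $\iota_{*}(\mathcal{I}_{g,1})\subseteq\mathcal{I}_{g,b}$, and $c\circ\iota_{*}$ restricts to the identity on $\mathcal{I}_{g,1}$.

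With these maps in hand the distortion estimate is formal. Both $\iota_{*}$ and $c$ are homomorphisms between finitely generated groups (the finite generation of $\mathcal{I}_{g,b}$ was noted above, and that of $\mathcal{I}_{g,1}$ is due to Johnson for $g\geq 3$), hence Lipschitz for the respective word metrics, both on the mapping class groups and, by the previous paragraph, on the Torelli groups; let $L$ be a common Lipschitz constant. Put $h_{i}:=\iota_{*}(w_{i})\in\mathcal{I}_{g,b}$. From the Lipschitz property of $\iota_{*}$ and the relation $c\circ\iota_{*}=\mathrm{id}$,
\[
\tfrac{1}{L}\,\|w_{i}\|_{\mathcal{M}_{g,1}}\leq\|h_{i}\|_{\mathcal{M}_{g,b}}\leq L\,\|w_{i}\|_{\mathcal{M}_{g,1}},
\]
so $\|h_{i}\|_{\mathcal{M}_{g,b}}$ grows linearly. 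On the other hand,
\[
\|h_{i}\|_{\mathcal{I}_{g,b}}\geq\tfrac{1}{L}\,\|c(h_{i})\|_{\mathcal{I}_{g,1}}=\tfrac{1}{L}\,\|w_{i}\|_{\mathcal{I}_{g,1}}\geq\tfrac{1}{L}\,\lambda^{i},
\]
which grows at least exponentially. By the definition of distortion recalled in the introduction, $\mathcal{I}_{g,b}$ is at least exponentially distorted in $\mathcal{M}_{g,b}$.

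The \emph{main obstacle} is not the final estimate, which is purely formal, but the structural bookkeeping of the second paragraph: one must arrange the decomposition $S_{g,b}=S_{g,1}\cup\Sigma$ and the capping maps carefully enough to guarantee simultaneously that $c\circ\iota_{*}=\mathrm{id}$ and that both $\iota_{*}$ and $c$ carry the nonstandard Torelli group $\varphi^{-1}(\mathcal{I}_{g,0})$ into the correct Torelli group. The identity $\bar{c}\circ c=\varphi$ and the fact that a single disk-capping induces an isomorphism on first homology are exactly what make the Torelli-preservation work, and these must be checked against the precise definition of $\mathcal{I}_{g,b}$ used here. An alternative route, closer to the literal Broaddus--Farb--Putman method, would be to construct a Johnson-type homomorphism on $\mathcal{I}_{g,b}$ directly and run their $\mathrm{Sp}$-equivariance and eigenvalue argument; there the difficulty migrates to defining this homomorphism and verifying equivariance for $b\geq 2$, which the reduction above sidesteps.
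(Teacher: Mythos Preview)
Your proof is correct and takes a genuinely different route from the paper's. The paper applies the Broaddus--Farb--Putman criterion (Proposition~\ref{criterion for exponential distortion}) directly to the pair $(\mathcal{M}_{g,b},\mathcal{I}_{g,b})$: it pulls the Johnson homomorphism $\tau\colon\mathcal{I}_{g,0}\to\wedge^{3}H/H$ back along the capping map $\varphi$ to obtain an $\mathcal{M}_{g,b}$-equivariant surjection $\psi=\tau\circ\varphi|_{\mathcal{I}_{g,b}}$, and observes that the composed action $\mathcal{M}_{g,b}\to\mathcal{M}_{g,0}\to\mathrm{GL}(\wedge^{3}H/H)$ still hits a partially hyperbolic element because $\varphi$ is surjective. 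You instead reduce to the known $b=1$ case by exhibiting a retraction pair $(\iota_{*},c)$ between $(\mathcal{M}_{g,1},\mathcal{I}_{g,1})$ and $(\mathcal{M}_{g,b},\mathcal{I}_{g,b})$ and transporting the distorted sequence through it. Your argument isolates a clean general principle---a Lipschitz retract of pairs transfers lower bounds on distortion---and spares you from rechecking equivariance or partial hyperbolicity; the paper's argument is shorter on the page and treats all $b\geq 0$ in one stroke without invoking the subsurface-inclusion homomorphism. Amusingly, the ``alternative route'' you sketch in your final paragraph is essentially what the paper actually does.
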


For $d\geq 2$, the {\it level $d$ mapping class group} ${\mathcal M}_{g, b}[d]$ of $S_{g, b}$ ($b=0, 1$) is the kernel of the natural homomorphism $\Phi_{d}\colon {\mathcal M}_{g, b}\rightarrow {\rm Aut}(H_{1}(S_{g, b};{\mathbb Z}/d{\mathbb Z}))$.
We show that the distortion of the Torelli groups in the level $d$ mapping class group is the same as that of in the mapping class groups, and so we obtain the following.

\begin{theorem}\label{second_thm}
For $g\geq 3$, $b\leq 1$ and $d\geq 2$, ${\mathcal I}_{g, b}$ is exponentially distorted in ${\mathcal M}_{g, b}[d]$.
\end{theorem}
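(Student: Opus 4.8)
The plan is to deduce Theorem~\ref{second_thm} from the already-established fact that $\mathcal{I}_{g, b}$ is exponentially distorted in $\mathcal{M}_{g, b}$ (Broaddus--Farb--Putman~\cite{Broaddus-Farb-Putman11} together with Cohen~\cite{Cohen14}), using that $\mathcal{M}_{g, b}[d]$ is sandwiched between $\mathcal{I}_{g, b}$ and $\mathcal{M}_{g, b}$ as a \emph{finite index} subgroup of $\mathcal{M}_{g, b}$. First I would record the chain of inclusions
\[
\mathcal{I}_{g, b}\leq \mathcal{M}_{g, b}[d]\leq \mathcal{M}_{g, b}.
\]
The right-hand inclusion is immediate. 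For the left-hand one, note that the reduction $H_{1}(S_{g, b};{\mathbb Z})\rightarrow H_{1}(S_{g, b};{\mathbb Z}/d{\mathbb Z})$ is natural with respect to homeomorphisms, so $\Phi_{d}(f)$ is the reduction of $\Phi(f)$; hence any $f$ acting trivially on $H_{1}(S_{g, b};{\mathbb Z})$ acts trivially on $H_{1}(S_{g, b};{\mathbb Z}/d{\mathbb Z})$, i.e. $\mathcal{I}_{g, b}\subseteq\ker\Phi_{d}=\mathcal{M}_{g, b}[d]$. Finally, for $b\leq 1$ the group $H_{1}(S_{g, b};{\mathbb Z}/d{\mathbb Z})\cong({\mathbb Z}/d{\mathbb Z})^{2g}$ is finite, so its automorphism group is finite, the image of $\Phi_{d}$ is finite, and therefore $\mathcal{M}_{g, b}[d]$ has finite index in $\mathcal{M}_{g, b}$.

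The general principle I would then invoke is that a finite index subgroup is undistorted: if $B\leq C$ with $[C:B]<\infty$, then the inclusion $B\hookrightarrow C$ is a quasi-isometry, so there is a constant $C_{1}$ with
\[
\tfrac{1}{C_{1}}\parallel h\parallel_{C}-C_{1}\leq\parallel h\parallel_{B}\leq C_{1}\parallel h\parallel_{C}+C_{1}\qquad(h\in B).
\]
Applying this with $B=\mathcal{M}_{g, b}[d]$ and $C=\mathcal{M}_{g, b}$ shows that the word norms $\parallel\cdot\parallel_{\mathcal{M}_{g, b}[d]}$ and $\parallel\cdot\parallel_{\mathcal{M}_{g, b}}$ are comparable, up to multiplicative and additive constants, on all of $\mathcal{M}_{g, b}[d]$, and in particular on $\mathcal{I}_{g, b}$.

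With these comparisons both bounds follow formally, so that the distortion of $\mathcal{I}_{g, b}$ in $\mathcal{M}_{g, b}[d]$ agrees with its distortion in $\mathcal{M}_{g, b}$. For the upper bound, the known exponential upper bound gives $\parallel h\parallel_{\mathcal{I}_{g, b}}\leq C\delta(\parallel h\parallel_{\mathcal{M}_{g, b}})+C'$ with $\delta$ exponential; substituting $\parallel h\parallel_{\mathcal{M}_{g, b}}\leq C_{1}\parallel h\parallel_{\mathcal{M}_{g, b}[d]}+C_{1}$ and using that precomposing an exponential with an affine function stays in the exponential class yields the desired bound in terms of $\parallel h\parallel_{\mathcal{M}_{g, b}[d]}$. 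For the lower bound, I would take the sequence $\{h_{i}\}\subseteq\mathcal{I}_{g, b}$ produced by Broaddus--Farb--Putman, for which $\parallel h_{i}\parallel_{\mathcal{M}_{g, b}}$ grows linearly while $\parallel h_{i}\parallel_{\mathcal{I}_{g, b}}$ grows exponentially; since $\parallel h_{i}\parallel_{\mathcal{M}_{g, b}[d]}\leq C_{1}\parallel h_{i}\parallel_{\mathcal{M}_{g, b}}+C_{1}$, the norms $\parallel h_{i}\parallel_{\mathcal{M}_{g, b}[d]}$ still grow only linearly, so the same sequence witnesses at least exponential distortion in $\mathcal{M}_{g, b}[d]$. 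I do not expect a genuine obstacle here: the argument is soft, and the only points requiring care are the functoriality giving $\mathcal{I}_{g, b}\subseteq\mathcal{M}_{g, b}[d]$ and the standard but essential fact that finite index subgroups are undistorted, which is exactly what forces the two distortions to coincide.
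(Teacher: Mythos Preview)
Your proposal is correct and follows essentially the same route as the paper: both use the chain $\mathcal{I}_{g,b}\leq\mathcal{M}_{g,b}[d]\leq\mathcal{M}_{g,b}$, the finite-index (hence undistorted) inclusion $\mathcal{M}_{g,b}[d]\leq\mathcal{M}_{g,b}$, and the known exponential distortion of $\mathcal{I}_{g,b}$ in $\mathcal{M}_{g,b}$ to transfer both the upper and lower bounds. The only cosmetic difference is that for the lower bound the paper phrases the step via the general ``composition of distortions'' principle, whereas you argue directly with the witness sequence $\{h_i\}$; these are equivalent.
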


We note that we cannot apply Broaddus-Farb-Putman's arguments to the upper bound of the distortion of ${\mathcal I}_{g, b}$ in ${\mathcal M}_{g, b}$ for $g\geq 3$ and $b\geq 2$.
The reason is as follows.
For $b\leq 1$, the image $\Phi ({\mathcal M}_{g, b})$ is isomorphic to the symplectic group $\mathrm{Sp}(2g;\mathbb{Z})$, and they use a property of $\mathrm{Sp}(2g;\mathbb{Z})$ for the upper bound of the distortion of ${\mathcal I}_{g, b}$ in ${\mathcal M}_{g, b}$.
However, for $b\geq 2$ we don't know whether the image $\Phi ({\mathcal M}_{g, b})$ is isomorphic to some symplectic group, and so we cannot use the same approach as that of Broaddus-Farb-Putman.

%%%%%%%%%%%%%%%%%%%%
%Proof of Theorem~\ref{first_thm}%%
%%%%%%%%%%%%%%%%%%%%
\section{Proof of Theorem~\ref{first_thm}}\label{Proof of First theorem}

In this section, we prove Theorem~\ref{first_thm}. Firstly, we give the definition of the partially hyperbolic matrices.

\begin{definition}
Let $V$ be a free abelian group.
We say that an element of the automorphism group $\mathrm{GL}(V)\cong \mathrm{Aut}(V)$ is {\it partially hyperbolic} if the corresponding linear transformation of $V\otimes\mathbb{C}$ has some eigenvalue $\lambda$ with $|\lambda|>1$.
We call such a matrix a {\it partially hyperbolic matrix}.
\end{definition}

We use the following proposition by Broaddus-Farb-Putman~\cite{Broaddus-Farb-Putman11} to prove Theorem~\ref{first_thm}.

\begin{proposition}{\rm (}\cite[Proposition 2.3]{Broaddus-Farb-Putman11}{\rm )}\label{criterion for exponential distortion}
Let $G$ be a finitely generated group and $K$ be a finitely generated subgroup of $G$.
Suppose that $V$ is a free abelian subgroup equipped with a $G$-action $\rho\colon G\rightarrow {\rm GL}(V)$ and that $\psi\colon K\rightarrow V$ is a surjective homomorphism which is $G$-equivariant, where $G$ acts on $K$ by conjugation.
If $\rho(G)$ contains a partially hyperbolic matrix, then the distortion of $K$ in $G$ is at least exponential.
\end{proposition}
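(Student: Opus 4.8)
The plan is to exploit the partially hyperbolic matrix to manufacture a sequence of conjugates inside $K$ whose $G$-word norm grows only linearly but whose $K$-word norm grows exponentially, and then to read off at-least-exponential distortion directly from the definition. First I would fix finite generating sets for $G$ and $K$ and equip the free abelian group $V\cong\mathbb{Z}^{r}$ with a word norm $\parallel\cdot\parallel_{V}$ (say the $\ell^{1}$-norm in the standard basis, which is bi-Lipschitz to the Euclidean norm, so I may pass freely to linear-algebra estimates). Since $\psi\colon K\rightarrow V$ is a homomorphism it sends each generator of $K$ to a vector of bounded norm, so there is a constant $C_{1}>0$ with $\parallel\psi(h)\parallel_{V}\leq C_{1}\parallel h\parallel_{K}$ for every $h\in K$; equivalently $\parallel h\parallel_{K}\geq C_{1}^{-1}\parallel\psi(h)\parallel_{V}$. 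This inequality is the device that transports exponential growth in $V$ back to exponential growth in $K$.

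Next, let $g\in G$ be chosen so that $A:=\rho(g)$ is partially hyperbolic, and let $v\in V$ be a vector to be specified. By surjectivity of $\psi$ I pick $k\in K$ with $\psi(k)=v$, and I set $h_{n}:=g^{n}kg^{-n}\in K$. On the one hand $\parallel h_{n}\parallel_{G}\leq 2n\parallel g\parallel_{G}+\parallel k\parallel_{G}$, which grows linearly in $n$. On the other hand, the $G$-equivariance of $\psi$ (with $G$ acting on $K$ by conjugation) gives $\psi(h_{n})=\rho(g^{n})\psi(k)=A^{n}v$, so the bound above yields $\parallel h_{n}\parallel_{K}\geq C_{1}^{-1}\parallel A^{n}v\parallel_{V}$. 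Everything therefore reduces to choosing $v\in V$ for which $\parallel A^{n}v\parallel_{V}$ grows exponentially in $n$.

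This last point is the crux and the only real obstacle. Working over $\mathbb{R}$, I would decompose $V\otimes\mathbb{R}\cong\mathbb{R}^{r}$ as $W\oplus U$, where $U$ is the sum of the generalized eigenspaces of $A$ with eigenvalue of modulus $>1$ and $W$ the sum of those with modulus $\leq 1$. Because $A$ is a real matrix this decomposition is stable under complex conjugation, hence defined over $\mathbb{R}$, and both summands are $A$-invariant; partial hyperbolicity forces $U\neq 0$, so $W$ is a proper subspace. Since the lattice $V\subset\mathbb{R}^{r}$ is not contained in any proper subspace, I can choose $v\in V$ with nonzero $U$-component $v_{U}$. Picking $\mu_{0}$ with $1<\mu_{0}<\min\{|\lambda|:\lambda\text{ an eigenvalue of }A|_{U}\}$, one gets $\parallel A^{n}v_{U}\parallel\geq c\,\mu_{0}^{n}$ for some $c>0$ and all large $n$ (the Jordan-block correction is only polynomial), while $\parallel A^{n}v_{W}\parallel$ grows at most polynomially; hence $\parallel A^{n}v\parallel_{V}\geq\tfrac{c}{2}\mu_{0}^{n}$ eventually. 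Combining with the previous paragraph, $\parallel h_{n}\parallel_{G}=O(n)$ while $\parallel h_{n}\parallel_{K}\geq C_{1}^{-1}\tfrac{c}{2}\mu_{0}^{n}$, so the sequence $\{h_{n}\}$ witnesses at-least-exponential distortion of $K$ in $G$. The care required lies entirely in this spectral estimate, in particular in ensuring the expanding subspace is genuinely visible to an integral vector even when $\lambda$ is non-real and $U$ is an irrational subspace, which is handled by the conjugation-invariance of the decomposition together with the lattice-versus-subspace observation.
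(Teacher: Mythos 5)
The paper does not prove this proposition: it is quoted from Broaddus--Farb--Putman \cite{Broaddus-Farb-Putman11} and used as a black box. Your argument is correct and is essentially the proof given in that reference --- conjugating by powers of the partially hyperbolic element gives linear growth of $\parallel h_{n}\parallel_{G}$, equivariance turns $\psi(h_{n})$ into $A^{n}v$, the real splitting $V\otimes\mathbb{R}=W\oplus U$ together with the fact that the lattice cannot lie in the proper subspace $W$ supplies an integral $v$ with $\parallel A^{n}v\parallel$ growing exponentially, and the Lipschitz bound $\parallel\psi(h)\parallel_{V}\leq C_{1}\parallel h\parallel_{K}$ pulls this back to exponential growth of the $K$-norm.
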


We assume that $g\geq 3$.
We set $H=H_{1}(S_{g, 0};{\mathbb Z})$.
Broaddus-Farb-Putman~\cite{Broaddus-Farb-Putman11} proved that $G=\mathcal{M}_{g, 0}$, $K=\mathcal{I}_{g, 0}$, $V=\wedge^{3}H/H$, and $\psi\colon K\rightarrow V$ satisfy the assumptions in Proposition~\ref{criterion for exponential distortion}, where $\psi$ is the Johnson homomorphism.
Hence $\mathcal{I}_{g, 0}$ is at least exponentially distorted in $\mathcal{M}_{g, 0}$.
We use this fact in the proof.

\begin{proof}[Proof of Theorem~\ref{first_thm}]
We set $G=\mathcal{M}_{g, b}$ and $K=\mathcal{I}_{g, b}$ ($g\geq 3$, $b\geq 2$).
By the definition of $\mathcal{I}_{g, b}$, $\mathcal{M}_{g, b}$ acts on $\mathcal{I}_{g, b}$ by conjugation.
% (see \cite[Theorem 3.3]{Putman11}).
Further, we put $V=\wedge^{3}H/H$.
We regard $S_{g,0}$ as the surface obtained by capping $S_{g,b}$ by $b$ disks.
Let $\varphi\colon\mathcal{M}_{g, b}\rightarrow\mathcal{M}_{g, 0}$ be the homomorphism defined in Section~\ref{Introduction}.
Let $\tau\colon\mathcal{I}_{g, 0}\rightarrow \wedge^{3}H/H$ be the Johnson homomorphism, and $\psi$ the composition of $\varphi|_{K}$ with $\tau$.
The homomorphism $\psi$ is surjective since $\varphi$ induces the surjective homomorphism $\varphi|_{K}\colon K\rightarrow \mathcal{I}_{g, 0}$.
We denote by $\rho$ a homomorphism defined by the composition of $\varphi$ with the homomorphism $\varphi'\colon\mathcal{M}_{g, 0}\rightarrow\mathrm{GL}(\wedge^{3}H/H)$.
Since $\varphi$ is surjective and Broaddus-Farb-Putman~\cite{Broaddus-Farb-Putman11} showed that ${\rm Im}(\varphi')$ contains a partially hyperbolic matrix, ${\rm Im}(\rho)$ also contains a partially hyperbolic matrix.

We show that they satisfy the assumptions of Proposition~\ref{criterion for exponential distortion}.
It is sufficient to prove the surjective homomorphism $\psi\colon\mathcal{I}_{g, b}\rightarrow\wedge^{3}H/H$ is $\mathcal{M}_{g, b}$-equivariant, namely, $f\cdot\psi(g)=\psi(f\cdot g)$ for any $f\in\mathcal{M}_{g,b}$ and $g\in\mathcal{I}_{g, b}$.
Since $f\cdot\psi(g)=(\varphi(f))(\psi(g))$ and $\psi(f\cdot g)=\psi(fgf^{-1})$, we show $(\varphi(f))(\psi(g))=\psi(fgf^{-1})$.
Since $\tau$ is $\mathcal{M}_{g, b}$-equivariant, we have $(\varphi(f))(\psi(g))=\tau(\varphi(f)\cdot\varphi(g))=\tau(\varphi(fgf^{-1}))=\psi(fgf^{-1})$.
Hence, $\psi$ is $\mathcal{M}_{g, b}$-equivariant, and we have finished the proof.
\end{proof}

%%%%%%%%%%%%%%%%%%%%%
%Proof of Theorem~\ref{second_thm}%
%%%%%%%%%%%%%%%%%%%%%
\section{Proof of Theorem~\ref{second_thm}}\label{Proof of Second theorem}

In this section we prove Theorem~\ref{second_thm}.

\begin{proof}[Proof of Theorem~\ref{second_thm}]
We can show ${\mathcal M}_{g, b}[d]$ has a finite index in ${\mathcal M}_{g, b}$.
Then, ${\mathcal M}_{g, b}[d]$ is quasi-isometrically embedded in ${\mathcal M}_{g, b}$, that is, there exists $\lambda>0$ such that $\parallel f\parallel_{{\mathcal M}_{g, b}}\leq\lambda\parallel f\parallel_{{\mathcal M}_{g, b}[d]}+\lambda$ for any $f\in{\mathcal M}_{g, b}[d]$.
From the fact that the distortion of ${\mathcal I}_{g, b}$ in ${\mathcal M}_{g, b}$ is exponential, there exist $C_{1}, C_{2}>0$ such that
$\parallel f\parallel_{{\mathcal I}_{g, b}}\leq C_{1}e^{\parallel f\parallel_{{\mathcal M}_{g, b}}}+C_{2}$ for any $f\in{\mathcal I}_{g, b}$.
Hence, it follows that $\parallel f\parallel_{{\mathcal I}_{g, b}}\leq C_{1}e^{\lambda\parallel f\parallel_{{\mathcal M}_{g, b}[d]}+\lambda}+C_{2}$.
Then we see the distortion of ${\mathcal I}_{g, b}$ in ${\mathcal M}_{g, b}[d]$ is at most exponential.
If $L\leq K\leq G$, then the distortion of $L$ in $G$ is at most the composition of the distortion of $L$ in $K$ with the distortion of $K$ in $G$.
We know ${\mathcal I}_{g, b}\leq {\mathcal M}_{g, b}[d]\leq {\mathcal M}_{g, b}$.
Now ${\mathcal M}_{g, b}[d]$ has finite index in ${\mathcal M}_{g, b}$.
Thus ${\mathcal M}_{g, b}[d]$ is undistorted in ${\mathcal M}_{g, b}$.
Therefore the distortion of ${\mathcal I}_{g, b}$ in ${\mathcal M}_{g, b}[d]$ is at least exponential, and we are done.
\end{proof}

%%%%%%%%%%%%%%%%%%%
%%%Acknowledgements%%%%%%
%%%%%%%%%%%%%%%%%%%
\par
{\bf Acknowledgements:} The authors are deeply grateful to Hisaaki Endo for his warm encouragement and helpful advice.
The first author was supported by JSPS KAKENHI, the grant number 16J00397 and the second author was supported by JSPS KAKENHI, the grant number 15J10066 of Research Fellowship for Young Scientists.

%%%%%%%%%%%%%%%%%%%
%%%Reference%%%%%%%%%%%
%%%%%%%%%%%%%%%%%%%

\end{document}